\documentclass[12 pt]{amsart}
\usepackage{amsmath,times,epsfig,amssymb,amsbsy,amscd,amsfonts,amstext,color,bm,comment}
\usepackage[arrow,matrix]{xy}

\usepackage{here}

\usepackage{tikz}

\numberwithin{equation}{section}

\theoremstyle{plain}
\newtheorem{theorem}{Theorem}[section]

\newtheorem{lemma}[theorem]{Lemma}

\newtheorem{proposition}[theorem]{Proposition}

\theoremstyle{definition}
\newtheorem{definition}[theorem]{Definition}
\newtheorem{example}[theorem]{Example}

\theoremstyle{remark}
\newtheorem{remark}[theorem]{Remark}

\newcommand{\Z}{\mathbb{Z}}

\newcommand{\R}{\mathbb{R}}

\newcommand{\scL}{\mathcal{L}}

\newcommand{\conv}{\mathop{\rm Conv}\nolimits}
\newcommand{\overphi}{\varphi}

\begin{document}

\title[Chip-firing games]{Finite record sets of chip-firing games}

\begin{abstract}
A finite graph with an assignment of non-negative integers to 
vertices gives chip-firing games. 
Chip-firing games determine languages (sets of words) called 
the record sets of legal games. 
Bj\"orner, Lov\'asz and Shor found several properties that are satisfied 
by record sets. In this paper, we will find two more properties of record sets. 
Under the assumption that the record set is finite and 
the game fires only two vertices, 
these properties characterize the record sets of graphs. 
\end{abstract}

\author{Kentaro Akasaka}
\address{Kentaro Akasaka: 
Department of Mathematics, Faculty of Science, Hokkaido University, 
Kita 10, Nishi 8, Kita-Ku, Sapporo 060-0810, Japan.}
\email{ken.akasaka.japan@gmail.com}

\author{Suguru Ishibashi}
\address{Suguru Ishibashi: 
Department of Mathematics, Faculty of Science, Hokkaido University, 
Kita 10, Nishi 8, Kita-Ku, Sapporo 060-0810, Japan.}
\email{sgr.ishibashi@gmail.com}

\author{Masahiko Yoshinaga}
\address{Masahiko Yoshinaga: 
Department of Mathematics, Osaka University, 
Toyonaka, Osaka 560-0043, Japan}
\email{yoshinaga@math.sci.osaka-u.ac.jp}

\thanks{The authors thank Ahmed Umer Ashraf and Tan Nhat Tran 
for fruitful discussion on chip-firing games. The authors also thank 
the referee(s) for careful reading and lot of suggestions to improve the paper. 
M. Yoshinaga is partially supported by 
JSPS KAKENHI Grant Numbers JP18H01115, JP19K21826}

\keywords{Chip-firing games, languages}

\date{\today}
\maketitle



\section{Introduction}
\label{sec:intro}
The chip-firing game is a one-player game played on a graph. 
The vertices of the graph have multiple chips,
the player chooses a vertex whose degree is equal to or less than the
number of chips. 
Then, the operation ``firing a vertex'' 
moves 
chips on the vertex to adjacent vertices along the edges 
(see \S \ref{sec:notation} for precise formulation). 
Repeat this operation until the number of chips at each vertex is less than the
degree.

There are various aspects of research on chip-firing games, see 
\cite{klivans} for details. 
As one of the techniques for studying chip-firing games, 
Bj\"orner, Lov\'asz and Shor 
constructed a formal language (a set of words) from chip-firing games \cite{bls}.
By studying these formal language, they proved basic results on chip-firing games.

The purpose of this paper is to examine the relationship between 
chip-firing games and formal languages in more detail.
In particular, we aim to obtain the necessary and sufficient conditions for a
given finite language to be obtained from chip-firing games.

\section{Notation and background}
\label{sec:notation}

Let $G$ be a finite connected graph 
(without loops) on the vertex set $V=\{1, \dots, n\}$. 
Recall that $\deg(i)$ is the number of edges adjacent to $i$. 
For two vertices $i, j\in V$, denote by $e(i, j)$ the number of 
edges connecting $i$ and $j$. 
Let $(\varphi_i)_{i\in V}\in\Z_{\geq 0}^n$ be a vector with nonnegative 
integer components. We consider $(\varphi_i)_{i\in V}$ as a configuration 
of chips. 
Recall that firing the vertex $i\in V$ means that we send a chip on $i$ along 
each edge adjacent to $i$ to the opposite vertex. Then we obtain a 
new configuration $(\varphi'_i)_{i\in V}$, which is 
\[
\varphi'_j=
\left\{
\begin{array}{cc}
\varphi_i-\deg(i),& j=i\\
\varphi_j+e(i, j),& j\neq i. 
\end{array}
\right.
\]
The firing is called {\bf legal} if $(\varphi'_i)_{i\in V}$ is a nonnegative 
integer vector. A sequence of legal firings is called a {\bf legal game}. 
A vertex $i\in V$ is said to be ready if $\varphi_i\geq \deg(i)$. 
The game terminates if there are no vertices ready to fire. 

We say that the game is finite if it terminates after finitely many firings. Conversely, if it does not terminate in arbitrarily many firings, we say the game is infinite. Whether a game is finite or not depends on the graph and the initial chip configuration. 

Next, we briefly recall several notions on words and languages. 
A {\bf word} on the alphabet 
$\Sigma=\{1, \dots, n\}$ is a  finite string of elements of $\Sigma$.  
We denote the empty word by $\epsilon$. 
The length of a word $w$ is denoted by $|w|$. 
A word $u$ is called a {\bf beginning section} of $w$ if 
$w$ is expressed as $w=uv$ by a word $v$. 
A {\bf subword} of a word $w$ is obtained by deleting letters from $w$ arbitrarily. 
(Note that a subword need not consist of consecutive letters of the word $w$.) 
The {\bf score} $[w]$ is a vector in $\Z_{\geq 0}^n$ whose $i$-th component 
is the number of the letter $i$ appearing in $w$. We denote the coordinate-wise 
maximum of $[w_1]$ and $[w_2]$ by $[w_1]\vee [w_2]\in\Z_{\geq 0}^n$. 

\begin{definition}
For a game on the graph $G$ (with vertex set $V$) 
and a configuration $(\varphi_i)_{i\in V}$, 
we make a word on $V$ by arranging the symbols of the vertices 
in the order of firing in the game. 
We call this word the {\bf record} of the game. 
We can construct a language $L_{G,\varphi}$ on $V$ as the set of records of
all legal games on the graph $G$ and the configuration $(\varphi_i)_{i\in V}$. 
We call this language the {\bf record set} for $G$ and $(\varphi_i)_{i\in V}$. 
\end{definition}

It has been shown in \cite{bls} that the record set has the following properties. 

\begin{definition}
Let $\scL$ be a language on the alphabet $\Sigma$. 
\begin{itemize}
\item[(LH)] 
We say that $\scL$ is {\bf left-hereditary} if any beginning section of every word 
$w\in \scL$ also belongs to $\scL$. 
\item[(LF)] 
We say that $\scL$ is {\bf locally free} if: 
Let $w \in \scL$ and $a, b\in\Sigma$ with $a\neq b$. 
If $wa, wb \in \scL$, then $wab \in \scL$. 
\item[(PM)] 
We say that $\scL$ is {\bf permutable} if: 
Let $u, w \in \scL$ with $[u] = [w]$.  If $ua \in \scL$ for some $a \in \Sigma$, 
then $wa \in \scL$. 
\item[(SE)] 
We say that $\scL$ has {\bf strong exchange property} if: 
If $u, v \in \scL$ then $u$ contains a subword $u'$ such that $wu' \in \scL$ and $[wu'] = [u] \vee [w]$
\end{itemize}
\end{definition}

\begin{proposition}[\cite{bls}]
\begin{itemize}
\item[(1)] 
If the language $\scL$ satisfies (LH), (LF), (PM), then it also satisfies (SE). 
\item[(2)] 
The set of records $L_{G, \varphi}$ for a finite graph $G$ and an 
initial configuration $\varphi=(\varphi_i)_{i\in V}$ satisfies (LH), (LF), (PM). 
\item[(3)] 
If $L_{G, \varphi}$ is finite, then there is at least one vertex that is never fired.  
(In this case, let us denote by $\Sigma$ ($\subsetneq V$) the set of 
all fired vertices.)  
\end{itemize}
\end{proposition}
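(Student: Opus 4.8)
I treat the three parts in turn, since (2) unwinds the chip-firing dynamics directly, (1) is a purely combinatorial statement about $\scL$, and (3) combines the two.

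For part (2): property (LH) holds because truncating a legal game after fewer firings is again a legal game, so every beginning section of a record is a record. For (LF), suppose $w, wa, wb\in L_{G,\varphi}$ with $a\ne b$; after the game recorded by $w$ both $a$ and $b$ are ready, and firing $a$ changes the chip count at $b$ by $+e(a,b)\ge 0$, hence does not lower it, so $b$ stays ready and $wab$ is legal. For (PM) the key point is the abelian property: firing according to a score $s$ sends $\varphi_j$ to $\varphi_j-\deg(j)s_j+\sum_i e(i,j)s_i$, which is independent of the order. Thus $[u]=[w]$ forces $u$ and $w$ to reach the same configuration, so if $a$ is ready after $u$ it is ready after $w$, giving $wa\in L_{G,\varphi}$.

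For part (1) I reduce the strong exchange property to one lemma, \emph{monotone permutability} (MP): if $p,z\in\scL$ with $[p]\le[z]$, and $pc\in\scL$ with $[p]_c=[z]_c$, then $zc\in\scL$. Granting MP, (SE) follows by induction on $|u|$. Writing $u=\hat u c$ (so $\hat u\in\scL$ by (LH)) and applying the inductive hypothesis to $(\hat u,w)$ yields a subword $u''$ of $\hat u$ with $wu''\in\scL$ and $[wu'']=[\hat u]\vee[w]$; if $[u]_c\le[w]_c$ this $u''$ already realizes $[u]\vee[w]$, and otherwise $[\hat u]_c\ge[w]_c$, so MP with $p=\hat u$ and $z=wu''$ gives $wu''c\in\scL$, while $u''c$ is a subword of $\hat u c=u$. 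To prove MP I induct on $(|[z]|,\,|[z]|-|[p]|)$ ordered lexicographically: the case $[p]=[z]$ is exactly (PM), and in general I first produce a letter $e\ne c$ with $[p]_e<[z]_e$ and $pe\in\scL$ by an augmentation step, taking the first prefix of $z$ whose score fails to be $\le[p]$ to obtain a prefix $q\in\scL$ with $[q]\le[p]$, $[q]_e=[p]_e$ and $qe\in\scL$, and invoking MP on $(q,p)$, an instance with strictly smaller first coordinate $|[p]|<|[z]|$. Local freeness applied to $pc$ and $pe$ then gives $pec\in\scL$, and replacing $p$ by $pe$ lowers the deficiency while fixing $|[z]|$, closing the induction. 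The main obstacle is precisely the threatened circularity here—augmentation calls MP and MP calls augmentation—which is why the lexicographic measure (total score of the larger word first, deficiency second) is the crucial device: the augmentation call strictly lowers the first coordinate, while the direct recursion lowers only the second.

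For part (3), note first that $L_{G,\varphi}$ is finite if and only if no infinite legal game exists: each step offers at most $n$ choices, so by K\"onig's lemma an infinite language would contain an infinite branch. Arguing by contraposition, suppose every vertex is fired in some game; applying the exchange property of part (1) repeatedly, I merge these into a single legal game whose score is positive in every coordinate (its coordinatewise join dominates every individual game). I then aim to manufacture an infinite game, contradicting finiteness. The organizing principle is that firing every vertex equally often is chip-neutral—on a connected graph the all-ones vector spans the kernel of the Laplacian—so any legal game that returns to a previously seen configuration repeats forever; indeed the balance identity, expressing the terminal configuration as the initial one minus the Laplacian applied to the score, already yields a contradiction when the score is constant, since then the terminal configuration equals the initial one, which cannot be stable once some vertex has fired. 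The substance is therefore the non-constant case, and the main obstacle is this dynamical step: upgrading ``every vertex fires, with multiplicities'' to ``a legal cycle exists,'' where connectivity is indispensable for propagating readiness around the graph so that a full round of firings becomes legal and can be iterated.
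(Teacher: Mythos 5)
The paper itself offers no proof of this proposition; it is quoted from Bj\"orner--Lov\'asz--Shor \cite{bls}, so your attempt has to stand on its own. Your parts (2) and (1) do: in (2), prefixes of legal games are legal, firing $a$ only adds $e(a,b)\ge 0$ chips at $b$ so readiness of $b$ survives, and the configuration after a game depends only on its score, which gives (PM). Your part (1) is a complete and correct inductive proof: the lemma (MP), proved by lexicographic induction on the pair $(|[z]|,\,|[z]|-|[p]|)$, is well-founded exactly as you say --- the augmentation call $(q,p)$ has strictly smaller first coordinate since $[p]\lneq[z]$, the main recursion $(pe,z)$ fixes the first coordinate and lowers the second, (LF) supplies $pec\in\scL$ from $pe,pc\in\scL$ --- and the derivation of (SE) from (MP) by induction on $|u|$, splitting on whether $[u]_c\le[w]_c$, checks out in both cases.

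Part (3), however, has a genuine gap. The reduction is right: by contraposition, if every vertex fires in some legal game, (SE) merges these into one legal game $w$ whose score is positive in every coordinate, and one must then contradict finiteness. But the step you yourself label ``the main obstacle'' --- upgrading ``every vertex fires, with multiplicities'' to an infinite game --- is the entire content of the statement, and you do not prove it. The constant-score case you handle never arises on its own (nothing forces the merged score to be constant), and ``propagating readiness around the graph so that a full round of firings becomes legal'' is not an argument; indeed it is false that a full round of firings in an arbitrary prescribed order is legal. The standard way to close this (essentially the argument in \cite{bls}) avoids cycles entirely: since $\scL$ is finite, extend $w$ to a maximal word $w^*\in\scL$; maximality means no vertex is ready after $w^*$, while $w^*$ still fires every vertex. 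Let $u$ be the vertex whose \emph{last} firing in $w^*$ occurs earliest. Every other vertex --- in particular every neighbor of $u$ --- fires at least once after that moment, delivering to $u$ at least $\sum_{v\ne u}e(u,v)=\deg(u)$ chips (no loops), and $u$ holds at least $0$ chips right after its last firing and never fires again. Hence $u$ is ready at the end of $w^*$, contradicting maximality. With this lemma your contraposition closes immediately, and iterating it even produces the infinite game your sketch was aiming for.
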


We observe that the above properties do not characterize the record sets. 
Indeed, there is a language which satisfies these properties, but can not 
be expressed in the form $L_{G, \varphi}$. (See Lemma \ref{ABB} for details.) 
\begin{example}\label{CE}
Let $\scL = \{ \epsilon, 1, 12, 122\}$. 
Then $\scL$ satisfies (LH), (LF), and (PM), but cannot be a record set for a chip-firing game. 
\end{example}

\section{Results}

\subsection{abb-property.}

To characterize the language defined as record sets of chip-firing games, 
we have to exclude languages as in Example \ref{CE}. 

\begin{definition}
Let $\scL$ be a language on the alphabets $\Sigma$. 
\begin{itemize}
\item[(abb)] 
We say that $\scL$ has {\bf abb-property} (or satisfies (abb)) if: Let 
$w\in\scL$ and $a, b\in\Sigma$, if $wabb\in\scL$ then $wb\in\scL$. 
\end{itemize}
\end{definition}

We will show that the record set satisfies abb property. 
\begin{lemma}\label{ABB}
Given a graph $G$ with configuration of chips $\varphi$, 
the record set $L_{G, \varphi}$ of legal games satisfies (abb). 
\end{lemma}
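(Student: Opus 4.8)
The plan is to exploit the fact that chip-firing is deterministic and that the configuration reached depends only on the word fired. Concretely, for any $u\in L_{G,\varphi}$, firing the vertices of $u$ in order starting from $\varphi$ produces a well-defined configuration $\psi^u=(\psi^u_j)_{j\in V}$ given by
\[
\psi^u_j=\varphi_j-\deg(j)\,[u]_j+\sum_{i\neq j}e(i,j)\,[u]_i,
\]
and the game recorded by $u$ is legal precisely when, at each firing, the configuration reached so far assigns at least $\deg(v)$ chips to the vertex $v$ about to be fired. The crucial observation is that to test whether a single additional firing of $b$ is legal, one only needs to control the chip count at the one vertex $b$.

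First I would dispose of the trivial case $a=b$: there $wb=wa$ is a beginning section of $wabb$, so $wb\in L_{G,\varphi}$ immediately by the left-hereditary property (LH). For the main case $a\neq b$, write $\psi=\psi^w$ for the configuration reached after firing $w$ (the same whether $w$ is read alone or as a prefix of $wabb$). I would then track \emph{only} the chip count at $b$ through the three firings $a,b,b$: firing $a$ delivers $e(a,b)$ chips to $b$, raising its count to $\psi_b+e(a,b)$; the first firing of $b$ lowers it to $\psi_b+e(a,b)-\deg(b)$; and legality of the second firing of $b$ forces this quantity to be at least $\deg(b)$. Hence $\psi_b\geq 2\deg(b)-e(a,b)$.

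Finally I would invoke the elementary graph inequality $e(a,b)\leq\deg(b)$ (the number of edges joining $a$ and $b$ cannot exceed the total degree of $b$), which yields $\psi_b\geq 2\deg(b)-e(a,b)\geq\deg(b)$. This is exactly the readiness condition for $b$ at $\psi$, so firing $b$ directly after $w$ is legal, giving $wb\in L_{G,\varphi}$. I expect no serious obstacle; the only point to get right is that the two consecutive $b$-firings force $\psi_b$ to be large enough on its own, with firing $a$ contributing at most $\deg(b)$ extra chips to $b$, a bound made precise exactly by $e(a,b)\leq\deg(b)$.
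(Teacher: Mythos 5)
Your proposal is correct and is essentially the same argument as the paper's: both track the chip count at $b$ through the firings $a,b,b$ to derive $\varphi_b + e(a,b) \geq 2\deg(b)$, then apply the inequality $e(a,b)\leq \deg(b)$ to conclude $\varphi_b\geq\deg(b)$, so that $wb$ is legal. Your explicit treatment of the case $a=b$ via (LH) is a small additional care the paper omits (its proof silently assumes $a\neq b$), but it does not change the substance of the argument.
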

\begin{proof}
Let $(\varphi_i)_{i\in V}$ be the chip configuration 
after finishing a legal game $w$. 
When $wabb$ is a legal game for different symbols $a$ and $b$,
\[
	\varphi_b + e(a, b) \geq 2 \deg(b). 
\]
Since $e(a, b) \leq \deg(b)$, 
\[
	\varphi_b \geq 2 \deg(b) - e(a, b) \geq 2\deg(b) - \deg(b) = \deg(b)
\]
This implies $wb$ is legal. 
\end{proof}

\subsection{Separation property}

Let $G$ be a graph with vertex set $V$. 
Let $\varphi$ be an initial configuration of chips. As we have already 
mentioned, if the record set $L_{G, \varphi}$ is finite, then the 
set of fired vertices is a proper subset $\Sigma\subsetneq V$. 
Suppose $\Sigma=\{1, \dots, n\}$. 
Let 
$f_i\in\Z^n$ be an integral vector whose $i$-th component is 
$-\deg(i)$ and $j$-th component ($j\neq i$) is $e(i, j)$. Then 
the firing $i\in\Sigma$ is equivalent to the vector $f_i$ is added to $\varphi$. 
Then legal games can be interpreted as sequences of lattice points 
in the quadrant $D:=\Z_{\geq 0}^n$. 

\begin{example}
Suppose $\Sigma=\{1, 2\}$. 
Then $f_1 = \left( \begin{array}{c} -\deg(1) \\ e(1,2) \end{array} \right)$
and $f_2 = \left( \begin{array}{c} e(1,2) \\ -\deg(2) \end{array} \right)$. 
A word $w = i_1i_2 \dots i_m$ on $\Sigma = \{1,2\}$ ($i_p\in\{1,2\}$) 
is the record of a legal game 
if and only if $\overphi+f_{i_1}+\dots+f_{i_p}\in D$ ($p=1,\dots,m$) 
(See Figure \ref{1game}).  
\begin{figure}[H]
\begin{center}
\begin{tikzpicture}
	\draw[->, >=stealth, very thick] (-0.5,0) -- (4, 0)node[above]{$x_1$};
	\draw[->, >=stealth, very thick] (0,-0.5) -- (0, 4)node[right]{$x_2$};
	\draw (0, 0)node[below right] {O};
	\coordinate (p1) at (3.5, 3);
	\coordinate (p2) at (2, 3.5);
	\coordinate (p3) at (2.5, 2.5);
	\coordinate (p4) at (3, 1.5);
	\coordinate (p5) at (1.5, 2);
	\coordinate (p6) at (0, 2.5);
	\coordinate (p7) at (0.5, 1.5);
	\filldraw (p1) circle[radius = 0.5mm]; 
	\filldraw (p2) circle[radius = 0.5mm]; 
	\filldraw (p3) circle[radius = 0.5mm]; 
	\filldraw (p4) circle[radius = 0.5mm]; 
	\filldraw (p5) circle[radius = 0.5mm]; 
	\filldraw (p6) circle[radius = 0.5mm]; 
	\filldraw (p7) circle[radius = 0.5mm]; 
	\draw[->, >=stealth] (p1)node[above right]{$\overphi$} -- (2.03, 3.49);
	\draw[->, >=stealth] (p2) -- (2.485, 2.53);
	\draw[->, >=stealth] (p3) -- (2.985, 1.53);
	\draw[->, >=stealth] (p4) -- (1.53, 1.99);
	\draw[->, >=stealth] (p5) -- (0.03, 2.49);
	\draw[->, >=stealth] (p6) -- (0.485, 1.53);
	\node[rectangle, white, text = black, inner sep = 0.1em] (P3) at (2.76, 3.6) {$f_1$};
	\node[rectangle, white, text = black, inner sep = 0.1em] (P3) at (2, 2.9) {$f_2$};
\end{tikzpicture}
\caption{A walk corresponding to the word $w=122112$}
\label{1game}
\end{center}
\end{figure}
\end{example}

Note that the set of all records can be described as 
the set of all possible walks in the first quadrant $D=\Z_{\geq 0}^n$ 
(Figure \ref{allgame}). 

From the combinatorial point of view, it is also important to consider 
the affine transformation $F:\R^n\longrightarrow\R^n$ which sends 
$\varphi$ to $0$, and the vector $f_i$ to the standard basis vector $e_i$ 
(Figure \ref{koushi2}). Since the move defined by the addition of $e_i$ 
corresponds to a firing at the vertex $i\in\Sigma$, 
the lattice points are corresponding to the scores. 
Furthermore, a legal game is corresponding to a walk 
on the lattice points in the quadrant $D$. 
Now we introduce the following subset of scores. 

\begin{definition}
\label{def:Xi}
Let $\scL$ be a language on $\Sigma=\{1, \dots, n\}$. Denote by 
$[\scL]:=\{[w]\in\Z_{\geq 0}^n\mid w\in\scL\}$ the set of all 
scores. Define the subset $X_i\subset[\scL]$ by 
\[
X_i:=\{[w]\in[\scL]\mid [w]+e_i\notin[\scL]\}, 
\]
and denote by $X_i':=e_i+X_i$ the translation by the vector $e_i$. 
(See Figure \ref{koushi2} and Figure \ref{koushi3}.) 
\end{definition}
From this moment, we think the map $F$ has been applied. 
From the construction, it is easy to prove the following. 
\begin{lemma}\label{SP}
Let $G$ be a graph and $\varphi$ an initial configuration. 
Suppose $L_{G, \varphi}$ is finite. (Note that $L_{G, \varphi}$ is 
a language on the set of fired vertices $\Sigma$. ) 
Then 
$L_{G, \varphi}$ satisfies the following Separation Property (SP). 
\begin{itemize}
\item[(SP)] For $i=1, \dots, n$, 
$\conv(X_i)$ and $\conv (X'_i)$ are separated by a hyperplane. 
\end{itemize}
\end{lemma}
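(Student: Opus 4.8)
The plan is to exhibit the separating hyperplane explicitly as a level set of the affine function that records the number of chips sitting at vertex $i$. For a score $s=(s_1,\dots,s_n)\in[\scL]$, the configuration reached after any legal game whose record has score $s$ equals $\varphi+\sum_j s_j f_j$ and depends only on $s$; its $i$-th coordinate is
\[
\psi_i(s):=\varphi_i-\deg(i)\,s_i+\sum_{j\neq i}e(i,j)\,s_j,
\]
an affine function of $s$ whose linear part has the nonzero coefficient $-\deg(i)$ in the $i$-th variable, so its level sets are genuine hyperplanes. The single identity driving the argument is $\psi_i(s+e_i)=\psi_i(s)-\deg(i)$, reflecting that firing $i$ once more costs $\deg(i)$ chips at $i$.

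First I would describe $X_i$ in terms of $\psi_i$, proving that for $s\in[\scL]$ one has $s+e_i\in[\scL]$ if and only if $\psi_i(s)\geq\deg(i)$. The implication $\psi_i(s)\geq\deg(i)\Rightarrow s+e_i\in[\scL]$ is immediate: then vertex $i$ is ready after any game with score $s$, so firing it once more is legal and yields a record of score $s+e_i$. For the reverse implication, if $s+e_i\in[\scL]$ then the corresponding configuration is reached by a bona fide legal game and is therefore nonnegative, giving $\psi_i(s+e_i)\geq 0$; combined with $\psi_i(s)=\psi_i(s+e_i)+\deg(i)$ this forces $\psi_i(s)\geq\deg(i)$. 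Since $\psi_i$ is integer-valued, this yields the clean description
\[
X_i=\{s\in[\scL]\mid \psi_i(s)\leq \deg(i)-1\}.
\]

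The separation then falls out. For $s\in X_i\subseteq[\scL]$ the reached configuration is nonnegative, so $\psi_i(s)\geq 0$; hence $\psi_i\geq 0$ on $X_i$ and, $\psi_i$ being affine, on $\conv(X_i)$. For a point $s'=s+e_i\in X_i'$ the identity together with the bound $\psi_i(s)\leq\deg(i)-1$ gives $\psi_i(s')=\psi_i(s)-\deg(i)\leq -1$, so $\psi_i\leq -1$ on $X_i'$ and on $\conv(X_i')$. Therefore the hyperplane $\{s\mid \psi_i(s)=-\tfrac12\}$ strictly separates $\conv(X_i)$ from $\conv(X_i')$. (If $X_i=\emptyset$ then $X_i'=\emptyset$ and the statement is vacuous.)

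The step I expect to require the most care is the translation between the language-theoretic definition of $X_i$ and the chip-count condition, and in particular the recognition that both inequalities matter. The upper bound $\psi_i\leq\deg(i)-1$ comes for free from the definition of $X_i$, but the separation genuinely needs the lower bound $\psi_i\geq 0$ on $X_i$, which uses that every element of $[\scL]$ is realized by an actual legal game and hence corresponds to a nonnegative configuration. Without this lower bound the regions $\{\psi_i\leq\deg(i)-1\}$ and $\{\psi_i\leq -1\}$ overlap and no separation follows; it is precisely the nonnegativity of reachable configurations, together with the integrality of $\psi_i$, that opens the gap forcing $\conv(X_i)$ into $\{\psi_i\geq 0\}$ and $\conv(X_i')$ into $\{\psi_i\leq -1\}$.
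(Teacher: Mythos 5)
Your proof is correct and takes essentially the same approach the paper intends: the paper dismisses the lemma with ``from the construction, it is easy to prove,'' where the construction is exactly the affine identification of scores with chip configurations, and your $\psi_i$ is precisely the chip count at vertex $i$ under that identification, so your separating hyperplane $\{\psi_i=-\tfrac12\}$ is the pullback of a coordinate hyperplane in configuration space. Your write-up supplies the details the paper omits, correctly isolating the two needed bounds ($0\leq\psi_i\leq\deg(i)-1$ on $X_i$, hence $\psi_i\leq-1$ on $X_i'$).
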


\begin{proof}
Consider the inverse image of the map $F$. The sets 
$F^{-1}(X_i)$ and $F^{-1}(X'_i)$ are separated by the hyperplane 
$x_i=-\varepsilon$ ($0<\varepsilon\ll 1$) (Figure \ref{allgame+}). 
Then the image of the hyperplane separates 
$\conv(X_i)$ and $\conv (X'_i)$. 
\end{proof}


\begin{figure}[H]
\begin{tabular}{cc}
\begin{minipage}{0.5\hsize}
\begin{center}
\begin{tikzpicture}
	\draw[->, >=stealth, very thick] (-0.5,0) -- (5.5, 0)node[above]{$x_1$};
	\draw[->, >=stealth, very thick] (0,-0.5) -- (0, 5.5)node[right]{$x_2$};
	\draw (0, 0)node[below right] {O};
	\coordinate (p1) at (3.5, 3);
	\coordinate (p2) at (2, 3.5);
	\coordinate (p3) at (2.5, 2.5);
	\coordinate (p4) at (3, 1.5);
	\coordinate (p5) at (1.5, 2);
	\coordinate (p6) at (0, 2.5);
	\coordinate (p7) at (0.5, 1.5);
	\coordinate (p8) at (0.5, 4);
	\coordinate (p9) at (1, 3);
	\coordinate (p10) at (4, 2);
	\coordinate (p11) at (4.5, 1);
	\coordinate (p12) at (5, 0);
	\coordinate (p13) at (3.5, 0.5);
	\coordinate (p14) at (2, 1.0);
	\coordinate (p15) at (2.5, 0);
	\coordinate (p16) at (1, 0.5);
	\coordinate (p17) at (5.5, -1);	
	\filldraw (p1) circle[radius = 0.5mm]; 
	\filldraw (p2) circle[radius = 0.5mm]; 
	\filldraw (p3) circle[radius = 0.5mm]; 
	\filldraw (p4) circle[radius = 0.5mm]; 
	\filldraw (p5) circle[radius = 0.5mm]; 
	\filldraw (p6) circle[radius = 0.5mm]; 
	\filldraw (p7) circle[radius = 0.5mm]; 
	\filldraw (p8) circle[radius = 0.5mm]; 
	\filldraw (p9) circle[radius = 0.5mm]; 
	\filldraw (p10) circle[radius = 0.5mm]; 
	\filldraw (p11) circle[radius = 0.5mm]; 
	\filldraw (p12) circle[radius = 0.5mm]; 
	\filldraw (p13) circle[radius = 0.5mm]; 
	\filldraw (p14) circle[radius = 0.5mm]; 
	\filldraw (p15) circle[radius = 0.5mm]; 
	\filldraw (p16) circle[radius = 0.5mm]; 
	\filldraw [fill = red, white](p17) circle[radius = 0.5mm]; 
	\draw[->, >=stealth] (p1)node[above right]{$\overphi$} -- (2.03, 3.49);
	\draw[->, >=stealth] (p1) -- (3.985, 2.03);
	\draw[->, >=stealth] (p2) -- (0.53, 3.99);
	\draw[->, >=stealth] (p2) -- (2.485, 2.53);
	\draw[->, >=stealth] (p3) -- (2.985, 1.53);
	\draw[->, >=stealth] (p3) -- (1.03, 2.99);
	\draw[->, >=stealth] (p4) -- (1.53, 1.99);
	\draw[->, >=stealth] (p4) -- (3.485, 0.53);
	\draw[->, >=stealth] (p5) -- (1.985, 1.03);
	\draw[->, >=stealth] (p5) -- (0.03, 2.49);
	\draw[->, >=stealth] (p6) -- (0.485, 1.53);
	\draw[->, >=stealth] (p7) -- (0.985, 0.53);
	\draw[->, >=stealth] (p8) -- (0.985, 3.03);
	\draw[->, >=stealth] (p9) -- (1.485, 2.03);
	\draw[->, >=stealth] (p10) -- (2.53, 2.49);
	\draw[->, >=stealth] (p10) -- (4.485, 1.03);
	\draw[->, >=stealth] (p11) -- (4.985, 0.03);
	\draw[->, >=stealth] (p11) -- (3.03, 1.49);
	\draw[->, >=stealth] (p12) -- (3.53, 0.49);
	\draw[->, >=stealth] (p13) -- (2.03, 0.99);
	\draw[->, >=stealth] (p14) -- (2.485, 0.03);
	\draw[->, >=stealth] (p14) -- (0.53, 1.49);
	\draw[->, >=stealth] (p15) -- (1.03, 0.49);
	\node[rectangle, white, text = black, inner sep = 0.1em] (P3) at (2.76, 3.6) {$f_1$};
	\node[rectangle, white, text = black, inner sep = 0.1em] (P3) at (4.03, 2.6) {$f_2$};
\end{tikzpicture}
\caption{Record set (before $F$)}
\label{allgame}
\end{center}
\end{minipage}

\begin{minipage}{0.5\hsize}
\begin{center}
\begin{tikzpicture}
	\draw[->, >=stealth, very thick] (-1.5,0) -- (5.5, 0)node[above]{$x_1$};
	\draw[->, >=stealth, very thick] (0,-0.5) -- (0, 5.5)node[right]{$x_2$};
	\draw (0, 0)node[below right] {O};
	\coordinate (p1) at (3.5, 3);
	\coordinate (p2) at (2, 3.5);
	\coordinate (p3) at (2.5, 2.5);
	\coordinate (p4) at (3, 1.5);
	\coordinate (p5) at (1.5, 2);
	\coordinate (p6) at (0, 2.5);
	\coordinate (p7) at (0.5, 1.5);
	\coordinate (p8) at (0.5, 4);
	\coordinate (p9) at (1, 3);
	\coordinate (p10) at (4, 2);
	\coordinate (p11) at (4.5, 1);
	\coordinate (p12) at (5, 0); 
	\coordinate (p13) at (3.5, 0.5); 
	\coordinate (p14) at (2, 1.0); 
	\coordinate (p15) at (2.5, 0); 
	\coordinate (p16) at (1, 0.5); 
	\coordinate (p17) at (-1, 4.5);
	\coordinate (p18) at (-0.5, 3.5);
	\coordinate (p19) at (-1.5, 3);
	\coordinate (p20) at (-1.0, 2);
	\coordinate (p21) at (-0.5, 1);
	\coordinate (p22) at (5.5, -1);
	\coordinate (p23) at (4, -0.5);
	\coordinate (p24) at (3, -1);
	\coordinate (p25) at (1.5, -0.5); 		
	\draw[->, >=stealth] (p1) -- (2.03, 3.49);
	\draw[->, >=stealth] (p1) -- (3.985, 2.03);
	\draw[->, >=stealth] (p2) -- (0.53, 3.99);
	\draw[->, >=stealth] (p2) -- (2.485, 2.53);
	\draw[->, >=stealth] (p3) -- (2.985, 1.53);
	\draw[->, >=stealth] (p3) -- (1.03, 2.99);
	\draw[->, >=stealth] (p4) -- (1.53, 1.99);
	\draw[->, >=stealth] (p4) -- (3.485, 0.53);
	\draw[->, >=stealth] (p5) -- (1.985, 1.03);
	\draw[->, >=stealth] (p5) -- (0.03, 2.49);
	\draw[->, >=stealth] (p6) -- (0.485, 1.53);
	\draw[->, >=stealth] (p7) -- (0.985, 0.53);
	\draw[->, >=stealth] (p8) -- (0.985, 3.03);
	\draw[->, >=stealth] (p9) -- (1.485, 2.03);
	\draw[->, >=stealth] (p10) -- (2.53, 2.49);
	\draw[->, >=stealth] (p10) -- (4.485, 1.03);
	\draw[->, >=stealth] (p11) -- (4.985, 0.03);
	\draw[->, >=stealth] (p11) -- (3.03, 1.49);
	\draw[->, >=stealth] (p12) -- (3.53, 0.49);
	\draw[->, >=stealth] (p13) -- (2.03, 0.99);
	\draw[->, >=stealth] (p14) -- (2.485, 0.03);
	\draw[->, >=stealth] (p14) -- (0.53, 1.49);
	\draw[->, >=stealth] (p15) -- (1.03, 0.49);
	\draw[->, >=stealth, red] (p8) -- (-0.97, 4.49);
	\draw[->, >=stealth, red] (p9) -- (-0.47, 3.49);
	\draw[->, >=stealth, red] (p6) -- (-1.47, 2.99);
	\draw[->, >=stealth, red] (p7) -- (-0.97, 1.99);
	\draw[->, >=stealth, red] (p16) -- (-0.485, 0.99);
	\draw[->, >=stealth, blue] (p12) -- (5.485, -0.97);
	\draw[->, >=stealth, blue] (p13) -- (3.985, -0.47);
	\draw[->, >=stealth, blue] (p15) -- (2.985, -0.97);
	\draw[->, >=stealth, blue] (p16) -- (1.485, -0.47);
	\filldraw (p1) circle[radius = 0.5mm]; 
	\filldraw (p2) circle[radius = 0.5mm]; 
	\filldraw (p3) circle[radius = 0.5mm]; 
	\filldraw (p4) circle[radius = 0.5mm]; 
	\filldraw (p5) circle[radius = 0.5mm]; 
	\filldraw (p6) circle[radius = 0.5mm]; 
	\filldraw (p7) circle[radius = 0.5mm]; 
	\filldraw (p8) circle[radius = 0.5mm]; 
	\filldraw (p9) circle[radius = 0.5mm]; 
	\filldraw (p10) circle[radius = 0.5mm]; 
	\filldraw (p11) circle[radius = 0.5mm]; 
	\filldraw (p12) circle[radius = 0.5mm]; 
	\filldraw (p13) circle[radius = 0.5mm]; 
	\filldraw (p14) circle[radius = 0.5mm]; 
	\filldraw (p15) circle[radius = 0.5mm]; 
	\filldraw (p16) circle[radius = 0.5mm]; 
	\filldraw [fill = red, red](p17) circle[radius = 0.5mm]; 
	\filldraw [fill = red, red] (p18) circle[radius = 0.5mm]; 
	\filldraw [fill = red, red] (p19) circle[radius = 0.5mm]; 
	\filldraw [fill = red, red] (p20) circle[radius = 0.5mm]; 
	\filldraw [fill = red, red] (p21) circle[radius = 0.5mm]; 
	\filldraw [fill = red, blue] (p22) circle[radius = 0.5mm]; 
	\filldraw [fill = red, blue] (p23) circle[radius = 0.5mm]; 
	\filldraw [fill = red, blue] (p24) circle[radius = 0.5mm]; 
	\filldraw [fill = red, blue] (p25) circle[radius = 0.5mm]; 
\end{tikzpicture}
\caption{}
\label{allgame+}
\end{center}
\end{minipage}
\end{tabular}
\end{figure}

\begin{figure}[H]
\begin{tabular}{cc}
\begin{minipage}{0.5\hsize}
\begin{center}
\begin{tikzpicture}
	\draw[->, >=stealth, very thick] (-0.7,0) -- (5, 0)node[above]{$x$};
	\draw[->, >=stealth, very thick] (0,-0.7) -- (0, 5.5)node[right]{$y$};
	\draw (0, 0)node[below left] {O};
	\coordinate (p1) at (0, 0);
	\coordinate (p2) at (0, 1);
	\coordinate (p3) at (0, 2);
	\coordinate (p4) at (0, 3);
	\coordinate (p5) at (1, 0);
	\coordinate (p6) at (1, 1);
	\coordinate (p7) at (1, 2);
	\coordinate (p8) at (1, 3);
	\coordinate (p9) at (2, 0);
	\coordinate (p10) at (2, 1);
	\coordinate (p11) at (2, 2);
	\coordinate (p12) at (2, 3);
	\coordinate (p13) at (2, 4);
	\coordinate (p14) at (3, 2);
	\coordinate (p15) at (3, 3);
	\coordinate (p16) at (3, 4);
	\coordinate (p17) at (3, 0);
	\coordinate (p18) at (3, 1);
	\coordinate (p19) at (4, 2);
	\coordinate (p20) at (4, 3);
	\coordinate (p21) at (4, 4);
	\coordinate (p22) at (0, 4);
	\coordinate (p23) at (1, 4);
	\coordinate (p24) at (2, 5);
	\coordinate (p25) at (3, 5);
	\draw[->, >=stealth] (p1) -- (0.98, 0); 
	\draw[->, >=stealth] (p2) -- (0.98, 1);
	\draw[->, >=stealth] (p3) -- (0.98, 2);
	\draw[->, >=stealth] (p4) -- (0.98, 3);
	\draw[->, >=stealth] (p5) -- (1.98, 0);
	\draw[->, >=stealth] (p6) -- (1.98, 1);
	\draw[->, >=stealth] (p7) -- (1.98, 2);
	\draw[->, >=stealth] (p8) -- (1.98, 3);
	\draw[->, >=stealth] (p11) -- (2.98, 2);
	\draw[->, >=stealth] (p12) -- (2.98, 3);
	\draw[->, >=stealth] (p13) -- (2.98, 4);
	\draw[->, >=stealth, red] (p9) -- (2.98, 0);
	\draw[->, >=stealth, red] (p10) -- (2.98, 1);
	\draw[->, >=stealth, red] (p14) -- (3.98, 2);
	\draw[->, >=stealth, red] (p15) -- (3.98, 3);
	\draw[->, >=stealth, red] (p16) -- (3.98, 4);
	\draw[->, >=stealth] (p1) -- (0, 0.98); 
	\draw[->, >=stealth] (p2) -- (0, 1.98);
	\draw[->, >=stealth] (p3) -- (0, 2.98);
	\draw[->, >=stealth] (p5) -- (1, 0.98);
	\draw[->, >=stealth] (p6) -- (1, 1.98);
	\draw[->, >=stealth] (p7) -- (1, 2.98);
	\draw[->, >=stealth] (p9) -- (2, 0.98);
	\draw[->, >=stealth] (p10) -- (2, 1.98);
	\draw[->, >=stealth] (p11) -- (2, 2.98);
	\draw[->, >=stealth] (p12) -- (2, 3.98);
	\draw[->, >=stealth] (p14) -- (3, 2.98);
	\draw[->, >=stealth] (p15) -- (3, 3.98);
	\draw[->, >=stealth, blue] (p4) -- (0, 3.98);
	\draw[->, >=stealth, blue] (p8) -- (1, 3.98);
	\draw[->, >=stealth, blue] (p13) -- (2, 4.98);
	\draw[->, >=stealth, blue] (p16) -- (3, 4.98);
	\filldraw (p1) circle[radius = 0.5mm]; 
	\filldraw (p2) circle[radius = 0.5mm]; 
	\filldraw (p3) circle[radius = 0.5mm]; 
	\filldraw (p4) circle[radius = 0.5mm]; 
	\filldraw (p5) circle[radius = 0.5mm]; 
	\filldraw (p6) circle[radius = 0.5mm]; 
	\filldraw (p7) circle[radius = 0.5mm]; 
	\filldraw (p8) circle[radius = 0.5mm]; 
	\filldraw (p9) circle[radius = 0.5mm]; 
	\filldraw (p10) circle[radius = 0.5mm]; 
	\filldraw (p11) circle[radius = 0.5mm]; 
	\filldraw (p12) circle[radius = 0.5mm]; 
	\filldraw (p13) circle[radius = 0.5mm]; 
	\filldraw (p14) circle[radius = 0.5mm]; 
	\filldraw (p15) circle[radius = 0.5mm]; 
	\filldraw (p16) circle[radius = 0.5mm]; 
	\filldraw [fill = red, red](p17) circle[radius = 0.5mm]; 
	\filldraw [fill = red, red] (p18) circle[radius = 0.5mm]; 
	\filldraw [fill = red, red] (p19) circle[radius = 0.5mm]; 
	\filldraw [fill = red, red] (p20) circle[radius = 0.5mm]; 
	\filldraw [fill = red, red] (p21) circle[radius = 0.5mm]; 
	\filldraw [fill = red, blue] (p22) circle[radius = 0.5mm]; 
	\filldraw [fill = red, blue] (p23) circle[radius = 0.5mm]; 
	\filldraw [fill = red, blue] (p24) circle[radius = 0.5mm]; 
	\filldraw [fill = red, blue] (p25) circle[radius = 0.5mm]; 
	\draw[domain=2.1:4.2, thick] plot(\x, 3*\x -7);
	\draw[domain=-0.7:5.1, thick] plot(\x, 0.5*\x +3);
	\node[rectangle, white, text = black, inner sep = 0.1em] (P1) at (2.6, -0.5) {$l_1$};
	\node[rectangle, white, text = black, inner sep = 0.1em] (P1) at (-0.7, 3.1) {$l_2$};
\end{tikzpicture}
\caption{$X_i$ (after $F$)}
\label{koushi2}
\end{center}
\end{minipage}

\begin{minipage}{0.5\hsize}
\begin{center}
\begin{tikzpicture}
	\draw[->, >=stealth, very thick] (-0.7,0) -- (5, 0)node[above]{$x$};
	\draw[->, >=stealth, very thick] (0,-0.7) -- (0, 5.5)node[right]{$y$};
	\draw (0, 0)node[below left] {O};
	\coordinate (p1) at (0, 0);
	\coordinate (p2) at (0, 1);
	\coordinate (p3) at (0, 2);
	\coordinate (p4) at (0, 3);
	\coordinate (p5) at (1, 0);
	\coordinate (p6) at (1, 1);
	\coordinate (p7) at (1, 2);
	\coordinate (p8) at (1, 3);
	\coordinate (p9) at (2, 0);
	\coordinate (p10) at (2, 1);
	\coordinate (p11) at (2, 2);
	\coordinate (p12) at (2, 3);
	\coordinate (p13) at (2, 4);
	\coordinate (p14) at (3, 2);
	\coordinate (p15) at (3, 3);
	\coordinate (p16) at (3, 4);
	\coordinate (p17) at (3, 0);
	\coordinate (p18) at (3, 1);
	\coordinate (p19) at (4, 2);
	\coordinate (p20) at (4, 3);
	\coordinate (p21) at (4, 4);
	\coordinate (p22) at (0, 4);
	\coordinate (p23) at (1, 4);
	\coordinate (p24) at (2, 5);
	\coordinate (p25) at (3, 5);
	\draw[->, >=stealth] (p1) -- (0.98, 0); 
	\draw[->, >=stealth] (p2) -- (0.98, 1);
	\draw[->, >=stealth] (p3) -- (0.98, 2);
	\draw[->, >=stealth] (p4) -- (0.98, 3);
	\draw[->, >=stealth] (p5) -- (1.98, 0);
	\draw[->, >=stealth] (p6) -- (1.98, 1);
	\draw[->, >=stealth] (p7) -- (1.98, 2);
	\draw[->, >=stealth] (p8) -- (1.98, 3);
	\draw[->, >=stealth] (p11) -- (2.98, 2);
	\draw[->, >=stealth] (p12) -- (2.98, 3);
	\draw[->, >=stealth] (p13) -- (2.98, 4);
	\draw[->, >=stealth] (p1) -- (0, 0.98); 
	\draw[->, >=stealth] (p2) -- (0, 1.98);
	\draw[->, >=stealth] (p3) -- (0, 2.98);
	\draw[->, >=stealth] (p5) -- (1, 0.98);
	\draw[->, >=stealth] (p6) -- (1, 1.98);
	\draw[->, >=stealth] (p7) -- (1, 2.98);
	\draw[->, >=stealth] (p9) -- (2, 0.98);
	\draw[->, >=stealth] (p10) -- (2, 1.98);
	\draw[->, >=stealth] (p11) -- (2, 2.98);
	\draw[->, >=stealth] (p12) -- (2, 3.98);
	\draw[->, >=stealth] (p14) -- (3, 2.98);
	\draw[->, >=stealth] (p15) -- (3, 3.98);
	\filldraw (p1) circle[radius = 0.5mm]; 
	\filldraw (p2) circle[radius = 0.5mm]; 
	\filldraw (p3) circle[radius = 0.5mm]; 
	\filldraw (p4) circle[radius = 0.5mm]; 
	\filldraw (p5) circle[radius = 0.5mm]; 
	\filldraw (p6) circle[radius = 0.5mm]; 
	\filldraw (p7) circle[radius = 0.5mm]; 
	\filldraw (p8) circle[radius = 0.5mm]; 
	\filldraw (p9) circle[radius = 0.5mm]; 
	\filldraw (p10) circle[radius = 0.5mm]; 
	\filldraw (p11) circle[radius = 0.5mm]; 
	\filldraw (p12) circle[radius = 0.5mm]; 
	\filldraw (p13) circle[radius = 0.5mm]; 
	\filldraw (p14) circle[radius = 0.5mm]; 
	\filldraw (p15) circle[radius = 0.5mm]; 
	\filldraw (p16) circle[radius = 0.5mm]; 
	\filldraw [fill = red, red](p17) circle[radius = 0.5mm]; 
	\filldraw [fill = red, red] (p18) circle[radius = 0.5mm]; 
	\filldraw [fill = red, red] (p19) circle[radius = 0.5mm]; 
	\filldraw [fill = red, red] (p20) circle[radius = 0.5mm]; 
	\filldraw [fill = red, red] (p21) circle[radius = 0.5mm]; 
	\filldraw [fill = red, blue] (p22) circle[radius = 0.5mm]; 
	\filldraw [fill = red, blue] (p23) circle[radius = 0.5mm]; 
	\filldraw [fill = red, blue] (p24) circle[radius = 0.5mm]; 
	\filldraw [fill = red, blue] (p25) circle[radius = 0.5mm]; 
	\draw[domain=2.1:4.2, thick] plot(\x, 3*\x -7)node[above]{$\ell_1$};
	\draw[domain=-0.7:5.1, thick] plot(\x, 0.5*\x +3)node[right]{$\ell_2$};
	\draw (p9) -- (p10) -- (p16) -- (p14) -- (p9); 
	\draw[red] (p17) -- (p18) -- (p21) -- (p19) -- (p17); 
	\draw (p4) -- (p8) -- (p16) -- (p13) -- (p4); 
	\draw[blue] (p22) -- (p23) -- (p25) -- (p24) -- (p22); 
\end{tikzpicture}
\caption{$X_i'$}
\label{koushi3}
\end{center}
\end{minipage}
\end{tabular}
\end{figure}

\subsection{Characterization for $n=2$} 

In the previous subsection, we obtained several properties that 
are satisfied by finite record sets of chip-firing games.

\begin{theorem}\label{main}
Let $\scL$ be a finite language on $\Sigma = \{1, 2\}$. 
Then the following two conditions are equivalent.
	\begin{description}
		\item[(1)] $\scL$ satisfies (LH), (LF), (PM), (abb), and (SP).
		\item[(2)] There exist a finite graph $G$ and an initial 
configuration $\varphi$ of chips such that $\scL=L_{G, \varphi}$. 
	\end{description}
\end{theorem}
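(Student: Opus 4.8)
The two directions are very asymmetric. For $(2)\Rightarrow(1)$ there is nothing to do beyond bookkeeping: if $\scL=L_{G,\varphi}$ then, $\scL$ being finite, part (2) of the quoted Proposition gives (LH), (LF), (PM), Lemma~\ref{ABB} gives (abb), and Lemma~\ref{SP} gives (SP). All the content is in $(1)\Rightarrow(2)$. The first reduction there is to pass from the language to its score set. By part (1) of the Proposition, $\scL$ satisfies (SE), and (SE) lets one recover $\scL$ from $[\scL]$: a word $w=i_1\cdots i_m$ lies in $\scL$ iff every prefix score $[i_1\cdots i_p]$ lies in $[\scL]$. The forward implication is (LH); for the converse I write $w=w'c$ with $w'\in\scL$ by induction and $[w']+e_c\in[\scL]$, pick any $u\in\scL$ with $[u]=[w']+e_c$, and apply (SE) to $u,w'$. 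Since $[u]\vee[w']=[w']+e_c$, the exchanged subword $u'$ has score $e_c$, so $u'=c$ and $w'c=w'u'\in\scL$. Hence it suffices to build $G$ and $\varphi$ firing only $\{1,2\}$ whose reachable score set equals $[\scL]$: the resulting $L_{G,\varphi}$, being the set of monotone lattice walks from the origin inside its reachable scores, will then coincide with $\scL$ by the same reconstruction.

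Next I set up the geometry. Writing $k:=e(1,2)$, $d_1:=\deg(1)$, $d_2:=\deg(2)$, the only legality constraints are on the two $\Sigma$-coordinates, because the auxiliary vertices only gain chips. Thus $(a,b)$ is the score of a legal game exactly when it is reachable by a monotone lattice walk from the origin inside the polygon
\[
P=\{(a,b)\in\R^2_{\ge 0}\mid d_1a-kb\le\varphi_1,\ d_2b-ka\le\varphi_2\},
\]
since firing $1$ (resp. $2$) is legal precisely when the resulting first (resp. second) coordinate stays nonnegative. So realizing $[\scL]$ amounts to choosing integers $0\le k\le\min(d_1,d_2)$ with $d_1d_2>k^2$ and $\varphi_1,\varphi_2\ge 0$ so that the reachable part of $P\cap\Z^2$ equals $[\scL]$. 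In this picture $X_1$ is the set of reachable scores pressed against the edge $d_1a-kb=\varphi_1$, and $X_1'=X_1+e_1$ sits just across that edge; $X_2,X_2'$ play the symmetric role for $d_2b-ka=\varphi_2$. Hence the data I must produce are exactly the normals $(d_1,-k)$ and $(-k,d_2)$ of two separating lines for $(X_1,X_1')$ and $(X_2,X_2')$, sharing the common entry $k$.

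The construction then runs as follows. (SP) already furnishes separating lines for $(X_1,X_1')$ and $(X_2,X_2')$; I record the cones of admissible normals. Two refinements make them realizable. Reading (abb) through the computation in Lemma~\ref{ABB}, the inequality $e(a,b)\le\deg(b)$ is precisely $k\le d_i$, which forces the admissible slope of $\ell_1$ to be realizable with $d_1\ge k$ and that of $\ell_2$ with $d_2\ge k$; finiteness of $\scL$ forces the two lines to cross, i.e. $d_1d_2>k^2$. Choosing rational slopes inside the admissible intervals and clearing denominators yields integers $(k,d_1,d_2)$ with a common $k$, and I set $\varphi_1:=\max_{X_1}(d_1a-kb)$, $\varphi_2:=\max_{X_2}(d_2b-ka)$. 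That the reachable part of $P\cap\Z^2$ equals $[\scL]$ follows from a short two-sided argument: $[\scL]\subseteq P$ because sliding a reachable score rightward (resp. upward) to the boundary lands in $X_1$ (resp. $X_2$), where the chosen functional is $\le\varphi_1$ (resp. $\le\varphi_2$), and monotonicity of the functional does the rest; conversely a reachable point of $P$ cannot escape $[\scL]$, since the first escaping step would put an $X_1'$ (or $X_2'$) point inside $P$, contradicting the separation $\min_{X_1'}(d_1a-kb)>\varphi_1$. Finally I realize $(k,d_1,d_2,\varphi_1,\varphi_2)$ by a graph: join $1,2$ by $k$ edges and add $d_1-k$, $d_2-k$ further edges to an auxiliary vertex given artificially large degree and no chips, keeping the graph connected while preventing the auxiliary part from ever becoming ready to fire.

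The main obstacle is the coupling in the last paragraph. (SP) supplies the two separations \emph{independently}, but graph realizability demands a single shared multiplicity $k$ together with integer degrees satisfying $k\le d_1,d_2$ and $d_1d_2>k^2$; the crux is to show that the admissible normal cones for $(X_1,X_1')$ and $(X_2,X_2')$ can be met simultaneously by vectors of the coupled form $(d_1,-k)$ and $(-k,d_2)$. This is exactly where (abb) and the finiteness hypothesis are indispensable, while (SP) alone only guarantees bare separation. The degenerate cases $k=0$ (when $[\scL]$ is a full rectangle and the two vertices are non-adjacent) and the case in which only one letter occurs should be disposed of directly.
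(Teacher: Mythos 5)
Your proposal is correct and follows essentially the same route as the paper: the forward direction is the same bookkeeping from the Proposition and Lemmas \ref{ABB}, \ref{SP}, and your converse—reduce to the score set, use (abb) together with (SP) to pick rational separating lines of slope $>1$ for $(X_1,X_1')$ and $<1$ for $(X_2,X_2')$, clear denominators to get a common $k=e(1,2)$ with degrees $d_1,d_2$ (this is exactly the paper's scaling of the symmetric matrix $A$ by $m$), and realize the data with auxiliary vertices of huge degree that never fire—is the paper's construction in only slightly different notation. Your (SE)-based reconstruction of $\scL$ from $[\scL]$ and the two-sided reachability argument are in fact spelled out in more detail than the paper's corresponding assertion.
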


\begin{proof}
(2) $\Longrightarrow$ (1) has been already proved in the previous section. 
We shall prove the converse. Assume that the language $\scL$ satisfies 
(LH), (LF), (PM), (abb) and (SP). 
Let us take $X_1$ as in Definition \ref{def:Xi}. Then by (abb), 
the slope of each edge of the polygon $\conv(X_1)$ is at least $1$ 
(Figure \ref{koushi3}). 
We choose a line $\ell_1$ separating $X_1$ and $X_1'$ 
with rational slope $s_1>1$ (we allow $\ell_1$ touches $X_1$ but assume 
$\ell_1\cap X_1'=\emptyset$). 
Note that 
$\begin{pmatrix}
-s_1\\ 1
\end{pmatrix}$ 
is a normal vector. 
Similarly, the slope of each edge of the polygon $\conv(X_2)$ is at most $1$ 
and we can choose a line $\ell_2$ separating $X_2$ and $X_2'$ 
with rational slope $s_2<1$. 
Note that 
$\begin{pmatrix}
s_2\\ -1
\end{pmatrix}$ 
is a normal vector of $\ell_2$. 
We may also assume 
$\ell_1\cap \ell_2=\left\{
\begin{pmatrix}
c_1\\c_2
\end{pmatrix}\right\}$ 
is a rational point. 
Then the set of all words obtained from paths 
on lattice points starting from $0$ in the closed domain 
surrounded by the $x$-axis, $y$-axis, $\ell_1$ and $\ell_2$ 
is equal to $\scL$ because of (LH), (LF) and (PM). The remaining part is to 
realize this language as the record set of chip-firing games on a 
particular graph. 

Let 
\begin{equation}
\label{eq:symm}
A=
\begin{pmatrix}
-s_1s_2&s_2\\
s_2&-1
\end{pmatrix}. 
\end{equation}
and $m>0$ be a positive integer such that all components of $mA$ and $mA
\begin{pmatrix}
c_1\\c_2
\end{pmatrix}$ 
become integers. 
Let us construct a graph with vertices $V=\{1, 2, 3, 4\}$ and 
the following number of edges. 
$
e(1, 2)=ms_2, 
e(1, 3)=m(s_1s_2-s_2), 
e(2, 3)=m(1-s_2), 
e(1, 4)=e(2, 4)=0$ 
and 
$e(3, 4)\gg 0$ (Figure \ref{graph}). 
Then $\deg(1)=ms_1s_2$ and $\deg(2)=m$. 
Consider the initial 
configuration of chips: $\varphi_1=m(s_1s_2c_1-s_2c_2)$, 
$\varphi_2=m(c_2-s_2c_1)$, and $\varphi_3=\varphi_4=0$. 
Then $\scL=L_{G, \varphi}$. 
\end{proof}

\begin{figure}[H]
\begin{center}
\begin{tikzpicture}

\coordinate (P1) at (-1.5, 2.5);
\coordinate (P2) at (1.5, 2.5);
\coordinate (P3) at (0, 1);
\coordinate (P4) at (0, 0);

\draw (P4)  -- node[right] {$\gg 0$} (P3) -- node[right] {$m(1-s_2)$} (P2) -- node[above] {$ms_2$} (P1) -- node[left] {$m(s_1s_2-s_2)$} (P3);

\filldraw [fill = black] (P4) circle[radius = 0.8mm] node[below] {$4$}; 
\filldraw [fill = black] (P3) circle[radius = 0.8mm] node[above] {$3$}; 
\filldraw [fill = black] (P2) circle[radius = 0.8mm] node[right] {$2$}; 
\filldraw [fill = black] (P1) circle[radius = 0.8mm] node[left] {$1$}; 

\draw (P4) .. controls (0.1,0.5) .. (P3); 
\draw (P4) .. controls (-0.1,0.5) .. (P3); 

\draw (P3) .. controls (0.67,1.83) .. (P2); 
\draw (P3) .. controls (0.6,1.9) .. (P2); 

\draw (P1) .. controls (0,2.6) .. (P2); 
\draw (P1) .. controls (0,2.4) .. (P2); 

\draw (P3) .. controls (-0.67,1.83) .. (P1); 
\draw (P3) .. controls (-0.6,1.9) .. (P1); 

\end{tikzpicture}
\caption{The graph realizing the language $\scL$.}
\label{graph}
\end{center}
\end{figure}

\begin{remark}
If the set of fired vertices $\Sigma$ contains at least three vertices, 
the above construction does not work. In particular, 
if $|\Sigma|\geq 3$, in the equation (\ref{eq:symm}), it is not possible 
to choose $A$ to be symmetric in general. It is a challenging problem to 
characterize the language obtained as a finite record set of a chip-firing game 
for $|\Sigma|\geq 3$. 
\end{remark}

\end{document}